\documentclass{article}

\usepackage{amsmath, amsthm, amsfonts, amssymb, enumerate, esint, dsfont}
\usepackage{mathtools}
\usepackage[margin={1.6in,1.4in}]{geometry}

\usepackage[colorlinks=true,citecolor=blue]{hyperref}

\usepackage[dvipsnames]{xcolor}

\usepackage{todonotes}
%


\newtheorem{theorem}{Theorem}
\newtheorem{lemma}{Lemma}
\newtheorem{proposition}{Proposition}

\theoremstyle{definition}

\theoremstyle{remark}
\newtheorem{remark}{Remark}

\newcommand{\ud}{\,\mathrm{d}}
\newcommand{\R}{\mathbb{R}}
\newcommand{\Rd}{\mathbb{R}^d}

\newcommand{\Zd}{\mathbb{Z}^d}

\newcommand{\Td}{\mathbb{T}^d}

\newcommand{\ind}{\mathrm{1}}

\newcommand{\law}{\mathcal{L}}
\newcommand{\wconv}{\to^{\law}}
\newcommand{\G}{\mathbb{G}}
\newcommand{\F}{\mathcal{F}}

\begin{document}
	\title{Donsker Theorems for Occupation Measures of Multi-Dimensional Periodic Diffusions}
	
	\author{
		Neil Deo\thanks{University of Cambridge (\texttt{and30@cam.ac.uk})}
	}
	\date{}
	\maketitle
	
	\begin{abstract}
		We study the empirical process arising from a multi-dimensional diffusion process with periodic drift and diffusivity. The smoothing properties of the generator of the diffusion are exploited to prove the Donsker property for certain classes of smooth functions. We partially generalise the finding from the one-dimensional case studied in \cite{vandervaartDonskerTheoremsDiffusions2005}: that the diffusion empirical process exhibits stronger regularity than in the classical case of i.i.d. observations. As an application, precise asymptotics are deduced for the Wasserstein-1 distance between the time-$T$ occupation measure and the invariant measure in dimensions $d\leq3$.
	\end{abstract}
	
	\section{Introduction}
	
	Consider the multi-dimensional diffusion process $(X_t)_{t\ge0}$ which is the solution to the stochastic differential equation (SDE)
	\begin{equation}\label{Eq: Diffusion SDE}
		\ud X_t = b(X_t)\ud t + \sigma(X_t)\ud W_t, \quad X_0 = x_0\in\Rd, \quad t\geq0.
	\end{equation}
	Here, $(W_t)_{t\geq0}$ is a $d$-dimensional Brownian motion, and the \emph{drift field} $b:\Rd\to\Rd$ and the \emph{diffusivity} $\sigma:\Rd\to\R^{d\times d}$ are assumed to be Lipschitz continuous; thus the law of $X$ is uniquely defined on the cylindrical sigma-algebra of $[0,\infty)\times\Rd$. Under mild conditions on $b$ and $\sigma$, as $t\to\infty$ the law of $X_t$ converges to the \emph{invariant measure} $\mu = \mu_{b,\sigma}$, defined on a suitable state space. Moreover, the \emph{ergodic theorem} holds for suitable measurable sets $A\subset\Rd$:
	\begin{equation}\label{Eq: ergodic property of mu, sets}
		\hat{\mu}_T(A) := \frac{1}{T}\int_0^T \ind_A(X_t)\,\ud t \underset{T\to\infty}{\to} \mu(A)\quad\text{almost surely.}
	\end{equation}
	This random measure $\hat{\mu}_T$, called the \emph{time-$T$ occupation measure}, is the natural analogue of the empirical measure in the diffusion setting. The ergodic property does not only hold for sets, but also for sufficiently regular functions $f$:
	\begin{equation}\label{Eq: ergodic property of mu, functions}
		\hat{\mu}_T(f) := \frac{1}{T}\int_0^T f(X_t)\,\ud t \underset{T\to\infty}{\to} \int f\,\ud\mu =:\mu(f) \quad\text{almost surely.}
	\end{equation}
	
	With the ergodic property (\ref{Eq: ergodic property of mu, functions}) in hand, the next natural question is whether or not $\hat{\mu}_T(f)$ obeys a central limit theorem, and indeed this is the case (see, for example, \cite{bhattacharyaFunctionalCentralLimit1982}): for suitable functions $f$, we have that
	\begin{equation}\label{Eq: one function CLT}
		\sqrt{T}\left(\hat{\mu}_T(f) - \mu(f)\right) \overset{\law}{\to} N(0,\Gamma(f,f)) \quad \text{ as }T\to\infty,
	\end{equation}
	where the covariance operator $\Gamma$ is determined by $b$ and $\sigma$. By the Cram\'{e}r-Wold device, an analogous result holds for any suitable finite-dimensional class of functions $\F$. 
	
	In this paper, we investigate whether such a central limit theorem holds uniformly over a potentially \emph{infinite-dimensional} class $\F$. This question was addressed in the scalar diffusion setting ($d=1$) in \cite{vandervaartDonskerTheoremsDiffusions2005}: they prove that such a central limit theorem holds whenever the limit process exists as a tight Borel probability measure on $\ell^{\infty}(\F)$, the set of bounded functions on $\F$. This contrasts with the i.i.d. sampling model, in which the existence of the limiting process as a tight Borel measure is not sufficient for a uniform central limit theorem to hold: see Proposition 4.4.7 and the preceding discussion in \cite{gineMathematicalFoundationsInfinitedimensional2016}. In the scalar diffusion model, no such gap exists. Moreover, as (\ref{Eq: one function CLT}) dictates the covariance structure of the limit, one deduces necessary and sufficient conditions based on the class $\F$ and the operator $\Gamma$ for a Donsker theorem to hold over $\F$. Using these conditions, it is shown that the diffusion empirical process is smoother than its i.i.d. counterpart, in the sense that there exist function classes $\F$ which are Donsker for the diffusion empirical process but not for a classical (i.i.d.) empirical process. 
	
	However, the case of scalar diffusions is considerably easier than the multidimensional case due to the existence of local time, on  which the proofs of \cite{vandervaartDonskerTheoremsDiffusions2005} and several other works rely. Diffusion local time ceases to exist when $d\geq2$, and so alternative methods must be used to deduce concentration properties of the diffusion process $X$.
	Here, we use the approach of \cite{nicklNonparametricStatisticalInference2020}, based on an analysis of the \emph{generator} of the diffusion process as a partial differential operator. While other works on the statistics of multi-dimensional diffusions have relied on functional inequalities to deduce the necessary concentration properties of $(X_t)$ \cite{dalalyanAsymptoticStatisticalEquivalence2007,strauchAdaptiveInvariantDensity2018}, this requires the diffusion to be \emph{reversible}, or equivalently requires that $b$ is a gradient vector field. The PDE arguments of \cite{nicklNonparametricStatisticalInference2020} do not require this additional structural assumption. We restrict to the \emph{periodic case} (as in \cite{giordanoNonparametricBayesianInference2022,nicklNonparametricStatisticalInference2020}), in which $b = b(\cdot+k)$ and $\sigma = \sigma(\cdot+k)$ for any $k\in\Zd$. This effectively confines the diffusion process $X$ to the $d$-dimensional torus, in the sense that the values of $X\mod\Zd$ contain all of the statistical information about $b$ and $\sigma$. Moreover, although the process $X$ is not recurrent on $\Rd$, the invariant measure $\mu$ is well-defined on the torus, and it is in this sense that the ergodic and central limit theorems (\ref{Eq: ergodic property of mu, sets})-(\ref{Eq: one function CLT}) hold. For a discussion on the non-periodic case, see Section \ref{Subsection: Discussion}.
	
	Using these techniques, we uncover a similar story to the one-dimensional case: that the empirical process for diffusions exhibits stronger regularity than in the classical i.i.d. setting. As a demonstration, Theorem \ref{Thm: Wasserstein CLT} below implies that the weak convergence of $\hat{\mu}_T$ to $\mu$ in Wasserstein space occurs at a rate of $T^{-1/2}$ in dimensions $d\leq3$; in the i.i.d. setting, the parametric rate of convergence only occurs when $d=1$. This increased regularity is due to the smoothing properties of the inverse of the generator of the diffusion process, which appears in the covariance structure of the limit process: see (\ref{Eq: limit process cov kernel}) below.

	\section{Main Results}
	
	\subsection{Definitions \& Notation}
	
	Let $\Td$ be the $d$-dimensional torus, isomorphic to $(0,1]^d$ if opposite points on the cube are identified. We denote by $L^2(\Td)$ the usual $L^2$ space with respect to the Lebesgue measure $\ud x$, equipped with its inner product $\langle f, g \rangle = \langle f,g \rangle_{L^2} := \int fg \,\ud x$. (If ever a region of integration is unspecified, it should be taken to be the whole of $\Td$.) Let $\mu$ be a Borel probability measure on $\Td$ which is absolutely continuous with respect to Lebesgue measure, and whose density is bounded and bounded away from zero. Then the inner product $\langle f,g \rangle_{\mu} := \int fg\,\ud\mu$ induces an equivalent norm on $L^2(\Td)$. We further define the subspaces
	$$ L^2_0(\Td):= \left\{ f\in L^2(\Td): \int f\,\ud x=0 \right\}, \quad L^2_{\mu}(\Td) := \left\{ f\in L^2(\Td): \int f\,\ud \mu =0 \right\}. $$
	
	For $s\in\R$, let $H^s(\Td)$ denote the usual Sobolev space of functions, defined by duality when $s<0$. Recall that $L^2(\Td) = H^0(\Td)$. The Sobolev spaces comprise the case $p=q=2$ on the scale of Besov spaces $B^s_{pq}(\Td)$, \mbox{$s\in\R,1\leq p,q\leq \infty$}; see Chapter 4 of \cite{gineMathematicalFoundationsInfinitedimensional2016} for definitions. The usual (isotropic) H\"{o}lder spaces $C^s(\Td)$, $s\geq0$ are closely related to the spaces $B^s_{\infty\infty},s\geq0$: specifically, \mbox{$C^s(\Td) = B^s_{\infty\infty}(\Td)$} with equivalent norms for non-integer $s\geq0$, and  \mbox{$C^s(\Td) \subsetneq B^s_{\infty\infty}(\Td)$} when $s\in\mathbb{N}\cup\{0\}$. Finally, we introduce the $L^p$-Sobolev spaces: for $k\in\mathbb{N}$ and $1\leq p\leq\infty$, we define
	$$ W^{k,p}(\Td) := \left\{ f\in L^p(\Td): D^kf\text{ exists and }D^kf\in L^p\left((\Td)^{\otimes k}\right) \right\}; $$
	here $D^kf$ is the $k^{th}$-order weak derivative of $f$. Note that for $k\in\mathbb{N}$, \mbox{$W^{k,2}(\Td) = H^k(\Td)$}.
	
	We employ the same notation for vector fields \mbox{$f = (f_1,\ldots,f_d)$}, by a slight abuse of notation: in this case, $f\in H^s(\Td) = (H^s(\Td))^{\otimes d}$ means that each $f_j\in H^s(\Td)$, with norm $\|f\|_{H^s}^2 = \sum_{j=1}^d \|f_j\|_{H^s}^2$. When no confusion may arise, we omit the domain and simply write $C^s, H^s,$ etc. The codomain will always be clear from context.
	
	Let $\mu$ be a (Borel) probability measure on some metric space. We write $Z\sim\mu$ if $Z$ is a random variable with law $\mu$. Given a collection of random variables $(Z_T)_{T\geq0}$, we then write $Z_T \to^{\law} Z$, or $Z_T \wconv \mu$, to denote the usual notion of weak convergence as $T\to\infty$, see Chapter 11 of \cite{dudleyRealAnalysisProbability2002} or Section 3.7 of \cite{gineMathematicalFoundationsInfinitedimensional2016} for definitions. We also write $Z_T\to^P Z$ to mean that $Z_T$ converges in probability to $Z$ as $T\to\infty$ under the measure $P$.
	
	Let $(Z_f)_{f\in\F}$ be a stochastic process indexed by a set $\F$. Assume that the process $Z$ is bounded, and its finite-dimensional marginals correspond to those of a tight Borel probability measure on $\ell^{\infty}(\F)$, the space of all bounded functions $\F\to\R$, equipped with the supremum norm $\|\cdot\|_{\F}$. Then $Z$ has a measurable version taking values in a separable subset of $\ell^{\infty}(\F)$ almost surely, which we take to be $Z$ itself. Given bounded processes $(Z^n_f)_{f\in \F}$, $n\geq 1$, we say that \emph{$Z^n$ converges in law to $Z$ in $\ell^{\infty}(\F)$}, written
	\begin{equation}\label{Eq: weak convergence definition}
		Z^n \wconv Z \quad \text{in }\ell^{\infty}(\F),
	\end{equation}
	if for every bounded, continuous function $H:\ell^{\infty}(\F)\to\R$, we have
	\mbox{$ E^*H(Z^n) \to E H(Z)$},
	where $E^*$ denotes outer expectation.
	
	We will sometimes use the symbols $\lesssim,\gtrsim,\simeq$ to denote one- or two-sided inequalities which hold up to multiplicative constants that may be either universal or `fixed' in the relevant context (in the latter case, this will always be qualified).
	Finally, we write $\partial_i$ as shorthand for the weak partial derivative $\frac{\partial}{\partial x_i}$ on $\Rd$ or $\Td$, for $1\leq i \leq d$, and $\nabla$ for the weak gradient.

	\subsection{Diffusions with Periodic Drift; Invariant Measures}
	
	We consider the SDE (\ref{Eq: Diffusion SDE}) where the drift \mbox{$b:\Rd\to\Rd$} and diffusivity \mbox{$\sigma:\Rd\to\R^{d\times d}$} are Lipschitz continuous and one-periodic, so that \mbox{$b(x) = b(x+k)$}, \mbox{$\sigma(x)=\sigma(x+k)$} for all $x\in\Rd,k\in\Zd$. We may therefore consider $b,\sigma$ as maps on $\Td$; in general, we identify functions on the torus with their periodic extensions to $\Rd$ in this way. Under this assumption, a strong pathwise solution $(X_t)_{t\geq0}$ of the SDE (\ref{Eq: Diffusion SDE}) exists (e.g. Theorem 24.3 in \cite{bassStochasticProcessesRichard2011}); we write $P=P_{b,\sigma}=P_{b,\sigma,x}$ for the law of $X$ when $X_0=x$ on the cylindrical sigma-algebra of the path space $C([0,\infty)\to\Rd)$. We suppress the dependence on the initial value $x$ as our results do not depend on it; we further suppress dependence on $b$ and $\sigma$ where no confusion may arise, as we consider these to be fixed.
	
	As discussed previously, the periodicity of the drift $b$ and diffusivity $\sigma$ permit us to consider the diffusion $X$ to take values in $\Td$, without the loss of any statistical information about the parameters $b,\sigma$. (Note that the periodicity of $b$ means that $X$ is not even recurrent as a process on $\Rd$.) Once viewed as taking values in $\Td$, the process $X$ possesses an invariant measure $\mu = \mu_{b,\sigma}$ under mild conditions on $b$ and $\sigma$ (see Proposition \ref{Prop: inv measure existence} below), and the ergodic property \eqref{Eq: ergodic property of mu, functions} holds for all $f\in C(\Td)$.
	
	However, rather than viewing $\mu_{b,\sigma}$ as an object determining ergodic averages, we consider it as the solution of a certain partial differential equation arising from the \emph{infinitesimal generator} of the diffusion process $X$, defined as the second order partial differential operator \mbox{$L:H^2(\Td)\to L^2(\Td)$} given by
	\begin{equation}\label{Eq: generator definition}
		L = L_{b,\sigma} := \sum_{i,j=1}^d a_{ij}(\cdot)\partial_i\partial_j+ \sum_{i=1}^d b_i(\cdot)\partial_i
		= \sum_{i=1}^d \partial_i\left(\sum_{j=1}^d a_{ij}(\cdot)\partial_j \right) + \sum_{i=1}^d \tilde{b}_i(\cdot)\partial_i,
	\end{equation}
	where $a := \frac{1}{2}\sigma\sigma^{\top}$ and $\tilde{b}:\Td\to\Rd$ is defined by $\tilde{b}_i = b_i - \sum_j\partial_ja_{ij}$.
	Integrating by parts, one computes the $L^2$-adjoint operator of $L$ to be
	\begin{equation}\label{Eq: adjoint generator definition}
		L^* = L^*_{b,\sigma} = \sum_{i=1}^d \partial_i\left(\sum_{j=1}^d a_{ij}(\cdot)\partial_j \right) - \sum_{i=1}^d \tilde{b}_i(\cdot)\partial_i - \sum_{i=1}^d\partial_i \tilde{b}_i.
	\end{equation}
	A standard argument (as in Section 2.2 of \cite{nicklNonparametricStatisticalInference2020}, for example) establishes that the invariant measure $\mu$ must then weakly solve
	\begin{equation}\label{Eq: adjoint pde for inv measure}
		L^*\mu = 0 \quad \text{in }\Td.
	\end{equation}
	As $\sigma:\Td\to\R^{d\times d}$ is assumed to be Lipschitz, it is uniformly bounded. We further assume that $\sigma$ is such that the induced $a = \frac{1}{2}\sigma\sigma^{\top}$ is \emph{strictly elliptic}: that is, there exist constants $0<\lambda,\Lambda<\infty$ such that
	\begin{equation}\label{Eq: strict ellipticity definition}
		\lambda |\xi|^2 \leq \sum_{i,j=1}^d a_{ij}(x)\xi_i\xi_j \leq \Lambda |\xi|^2, \quad \forall x\in\Td,\xi\in\Rd.
	\end{equation}
	Under this assumption, the second-order differential operators $L$ and $L^*$ are strictly elliptic and one may utilise the standard theory for such operators (\cite{bersPartialDifferentialEquations1964,gilbargEllipticPartialDifferential2001}). 
	
	For sufficiently regular $b,\sigma$, the adjoint equation (\ref{Eq: adjoint pde for inv measure}) characterises the invariant measure, as shown by the following result.
	\begin{proposition} \label{Prop: inv measure existence}
		Assume $b\in C^1(\Td),\sigma\in C^2(\Td)$, and assume further that $a=\frac{1}{2}\sigma\sigma^{\top}$ satisfies (\ref{Eq: strict ellipticity definition}). Then a unique periodic solution $\mu=\mu_{b,\sigma}$ to (\ref{Eq: adjoint pde for inv measure}) satisfying $\int \ud\mu =1$ exists. Moreover, $\mu$ is Lipschitz and bounded away from zero on $\Td$. 
	\end{proposition}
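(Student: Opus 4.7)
The plan is to combine Fredholm theory for elliptic operators on the closed manifold $\Td$ with elliptic regularity and the maximum principle. First, observe that $L$ has no zero-order term, so $L\mathbf{1}=0$, and the strong maximum principle for strictly elliptic operators without zero-order term forces $\ker L = \mathbb{R}\cdot\mathbf{1}$ when $L$ is viewed as an operator $H^2(\Td)\to L^2(\Td)$. Since $L$ is elliptic on the compact manifold $\Td$, it is Fredholm, and the standard equality of kernel dimensions for scalar second-order elliptic operators on closed manifolds gives $\dim\ker L^* = \dim\ker L = 1$. Thus (\ref{Eq: adjoint pde for inv measure}) admits a nontrivial solution, unique up to scalar multiplication; I fix such a solution $\mu$ for the rest of the argument.

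Next I would establish regularity. Under the hypotheses $b\in C^1$ and $\sigma\in C^2$, the coefficients of $L^*$ (in either divergence or non-divergence form) are of class $C^1\subset C^{0,\alpha}$ for every $\alpha\in(0,1)$. Applying Schauder estimates from \cite{gilbargEllipticPartialDifferential2001} to $L^*\mu=0$ on $\Td$ then yields $\mu\in C^{2,\alpha}(\Td)$, and in particular $\mu$ is Lipschitz, covering that portion of the claim immediately.

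The main obstacle is showing that the (up to scalar) unique element $\mu\in\ker L^*$ has constant sign, so that it can be normalised to a probability density. My plan is to argue via the Markov semigroup $(e^{tL})_{t\ge0}$ on $C(\Td)$: since $L$ has no zero-order term this semigroup preserves constants and non-negativity, so its $L^2$-adjoint $(e^{tL^*})_{t\ge0}$ preserves non-negativity and total integral. Starting from any strictly positive smooth density $u_0$ with $\int u_0\,\ud x=1$, the Ces\`aro averages $\bar u_T:=T^{-1}\int_0^T e^{tL^*}u_0\,\ud t$ are probability densities satisfying
\[
L^*\bar u_T = T^{-1}\bigl(e^{TL^*}u_0-u_0\bigr),
\]
which is uniformly bounded in $L^\infty(\Td)$. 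Elliptic regularity on the compact $\Td$ then gives a uniform $C^{2,\alpha}$ bound on $\bar u_T$, hence precompactness in $C(\Td)$. Any subsequential limit $\mu_\infty$ is a non-negative probability density with $L^*\mu_\infty=0$; since $\ker L^*$ is one-dimensional, the chosen $\mu$ must be a scalar multiple of $\mu_\infty$ and therefore has constant sign.

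To finish, I would apply the Harnack inequality for strictly elliptic equations on $\Td$ to $\mu\ge 0$, $L^*\mu=0$: this yields $\sup\mu\le C\inf\mu$, which combined with $\int\mu\,\ud x=1$ forces $\inf\mu>0$. The normalisation then selects $\mu$ uniquely within the one-dimensional kernel, completing the proof. The sign argument above is the only genuinely non-routine step; an alternative would be to invoke a Krein--Rutman theorem for the compact positive semigroup $(e^{tL^*})$ directly, but the Ces\`aro construction is more elementary and tailored to the periodic setting at hand.
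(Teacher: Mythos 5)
Your route is genuinely different from the paper's: the paper simply defers to Proposition 1 of \cite{nicklNonparametricStatisticalInference2020}, whose argument is probabilistic/semigroup-theoretic, resting on the long-time heat kernel estimates of \cite{norrisLongTimeBehaviourHeat1997} to produce the invariant density as the limit of transition densities. You instead build the solution by pure PDE means: strong maximum principle to get $\ker L=\R\cdot\mathbf{1}$, Fredholm theory with index zero to get $\dim\ker L^{*}=1$, a Ces\`aro/compactness argument for the sign, and Harnack for the strict positivity. This is a legitimate and arguably more self-contained path (it avoids importing quantitative heat kernel bounds), at the cost of needing the index-zero fact and a slightly delicate compactness step: note that the uniform $L^{\infty}$ bound you assert for $e^{TL^{*}}u_{0}$ is itself a uniform-in-$T$ heat kernel upper bound, so either quote such a bound or run the compactness argument with the (trivial) uniform $L^{1}$ bound plus De Giorgi--Nash--Moser instead.

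There is one concrete error in the regularity step. Writing $L^{*}$ in non-divergence form, its zero-order coefficient is $-\sum_{i}\partial_{i}\tilde b_{i}=-\sum_{i}\partial_{i}b_{i}+\sum_{i,j}\partial_{i}\partial_{j}a_{ij}$, which under the standing hypotheses $b\in C^{1}$, $\sigma\in C^{2}$ is merely continuous, not H\"older. The Schauder theory of \cite{gilbargEllipticPartialDifferential2001} therefore does not apply in the form you invoke, and the conclusion $\mu\in C^{2,\alpha}$ is in general false at this level of coefficient regularity. The fix is to exploit the pure divergence structure $L^{*}u=\sum_{i}\partial_{i}\bigl(\sum_{j}a_{ij}\partial_{j}u-\tilde b_{i}u\bigr)$: De Giorgi--Nash--Moser gives $\mu\in C^{0,\alpha}$, whence the flux term $\tilde b\mu$ is $C^{0,\alpha}$ and divergence-form Schauder estimates yield $\mu\in C^{1,\alpha}$, which is all the Lipschitz claim of the proposition requires. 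With that correction (and the $L^{1}$ variant of the compactness step), your argument goes through.
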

	The proof is identical to that of \cite[Proposition 1]{nicklNonparametricStatisticalInference2020}, which covers the special case $\sigma\equiv \mathrm{id}$. The proof extends naturally to the strictly elliptic case considered here using standard elliptic PDE theory; in particular, the relevant heat kernel estimates from \cite{norrisLongTimeBehaviourHeat1997} hold for such strictly elliptic operators.
	Furthermore, standard arguments tell us that there is a well-defined \emph{solution operator} $L^{-1}:L^2(\Td)\to H^2(\Td)$ such that
	\begin{equation}\label{Eq: solution operator definition}
		L(L^{-1}[f]) = f \quad \forall f\in L^2_{\mu}.
	\end{equation}
	Moreover, $L^{-1}$ is a bounded linear operator from $L^2$ to $H^2$, and more generally can be thought of as a `2-smoothing' operator between Besov spaces: see Lemma \ref{Lemma: 2-smoothing estimate} below. 
	
	\subsection{Central Limit Theorems for $\hat{\mu}_T$}
	
	Our central limit theorem concerns the empirical process
	\begin{equation}\label{Eq: empirical process definition}
		\mathbb{G}_T(f) := \sqrt{T}\left( \hat{\mu}_T(f) - \mu(f)\right), \quad f\in\F,
	\end{equation}
	where $\F$ is some class of test functions. We wish to establish the weak convergence of $\G_T$ to some limit process $\G$ in $\ell^{\infty}(\F)$, in the sense of (\ref{Eq: weak convergence definition}). This is accomplished by checking the following two conditions (see \cite[Theorem 3.7.23]{gineMathematicalFoundationsInfinitedimensional2016} or \cite[Theorems 1.5.4 and 1.5.7]{vaartWeakConvergenceEmpirical1996}):
	\begin{enumerate}
		\item The finite-dimensional marginals of $\G_T$ converge weakly to those of $\G$.
		\item There exists a pseudo-metric $\rho$ on $\F$ such that $(\F,\rho)$ is totally bounded and for all $\varepsilon>0$,
		\begin{equation}\label{Eq: asymptotic equicontinuity condition}
			\lim_{\delta\to0}\limsup_{T\to\infty} \Pr\left\{ \sup_{\rho(f,g)\leq\delta} 		\left|\G_T(f)-\G_T(g)\right|>\varepsilon \right\} = 0.
		\end{equation}
	\end{enumerate}
	We refer to (\ref{Eq: asymptotic equicontinuity condition}) as the \emph{asymptotic equicontinuity condition}; it ensures that the sequence $\G_T$ is asymptotically tight, and consequently that the limit process $\G$ exists as a tight Borel probability measure on $\ell^{\infty}(\F)$.
	
	The first condition determines the form of the limiting process $\G$ (when it exists): an application of the martingale central limit theorem (see e.g. \cite[Theorem 7.1.4]{ethierMarkovProcessesCharacterization1986}) dictates the covariance structure of $\G$ to be given by \eqref{Eq: limit process cov kernel} below.
	It then remains to check that the asymptotic equicontinuity condition holds. Via chaining arguments, this can be reduced to suitably controlling the complexity of the class $\F$ as measured by metric entropy with respect to the pseudo-distance $\rho$, for a `good' choice of $\rho$. Here, one may leverage the smoothing property of the solution operator $L^{-1}$ to define a rather weak pseudo-distance $\rho$ (see (\ref{Eq: d_L definition}) below), with respect to which the process $\G_T$ is sub-Gaussian uniformly in $T$. These considerations lead to the following result.
	
	\begin{theorem}\label{Thm: Main CLT}
		Let $d\geq2$. Assume that there exists $\beta>0$ such that $b\in C^{1+\beta}(\Td)$, and that $\sigma\in C^2(\Td)$ satisfies the strict ellipticity condition (\ref{Eq: strict ellipticity definition}) for some $\lambda>0$. Let $1\leq p,q\leq\infty$, $s>\left(\max\left\{\frac{d}{2},\frac{d}{p}\right\}-1\right)$, and let $\F$ be a bounded subset of $B^s_{pq}$. Let $\G_T$ be the empirical process defined in (\ref{Eq: empirical process definition}). Then as $T\to\infty$,
		$$ \G_T \to^{\law} \G \quad \text{in }\ell^{\infty}(\F),$$
		where $\G = \G_{b,\sigma}$ is centred Gaussian process on $\F$ with covariance kernel
		\begin{equation}\label{Eq: limit process cov kernel}
			E \G(f)\G(g) = \langle\sigma^{\top} \nabla L^{-1}[\overline{f}],\sigma^{\top}\nabla L^{-1}[\overline{g}] \rangle_{\mu},
		\end{equation}
		whose law defines a tight Borel probability measure on $\ell^{\infty}(\F)$. Here, $\mu=\mu_{b,\sigma}$ is the invariant measure of the diffusion process with drift field $b$ and diffusivity $\sigma$, $L^{-1}=L^{-1}_{b,\sigma}$ is the inverse operator of the generator of the diffusion as defined in \eqref{Eq: solution operator definition}, and for $f\in\F$, $\overline{f}:=f-\int f\ud\mu$.
	\end{theorem}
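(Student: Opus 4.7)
The plan is to verify the two standard conditions for weak convergence in $\ell^{\infty}(\F)$: convergence of the finite-dimensional marginals of $\G_T$ to those of $\G$, and asymptotic equicontinuity with respect to a totally bounded pseudo-metric on $\F$. The backbone of both is a martingale decomposition arising from It\^o's formula applied to $u_f := L^{-1}[\overline{f}]$. Since $\overline{f} \in L^2_{\mu}$, the solution operator (\ref{Eq: solution operator definition}) gives $u_f \in H^2$ with $Lu_f = \overline{f}$, and It\^o's formula yields
\[
\G_T(f) = \tfrac{1}{\sqrt{T}}\bigl(u_f(X_T) - u_f(X_0)\bigr) \;-\; \tfrac{1}{\sqrt{T}}\int_0^T g_f(X_t)^{\top}\ud W_t, \qquad g_f := \sigma^{\top}\nabla u_f.
\]
Using the 2-smoothing of $L^{-1}$ (Lemma \ref{Lemma: 2-smoothing estimate}) and the Besov embedding $B^{s+2}_{pq} \hookrightarrow L^{\infty}$, valid under $s > d/p - 1$, one has $\sup_{f\in\F}\|u_f\|_{\infty}<\infty$, so the boundary term is uniformly $O(T^{-1/2})$ and can be discarded.

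For the finite-dimensional CLT, I would apply the martingale central limit theorem for continuous local martingales (\cite[Thm.~7.1.4]{ethierMarkovProcessesCharacterization1986}) to any finite linear combination of the $M_T(f_i) := -\int_0^T g_{f_i}(X_t)^{\top}\ud W_t$. The predictable bracket is $\langle M(f),M(h)\rangle_T = \int_0^T g_f(X_t)^{\top}g_h(X_t)\ud t$, and the ergodic theorem applied to the bounded continuous function $x\mapsto g_f(x)^{\top}g_h(x)$ gives $T^{-1}\langle M(f),M(h)\rangle_T \to \langle g_f, g_h\rangle_{\mu}$ almost surely, producing exactly the covariance kernel (\ref{Eq: limit process cov kernel}).

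The heart of the proof is asymptotic equicontinuity. The natural pseudo-metric is the intrinsic metric of the limit, $d_L(f,h) := \|g_{f-h}\|_{L^2(\mu)}$. Strict ellipticity and 2-smoothing in the Sobolev scale give the crucial bound $d_L(f,h) \lesssim \|f-h\|_{H^{-1}}$; combined with classical Besov embeddings ($B^s_{pq} \hookrightarrow H^s$ when $p\geq 2$, and $B^s_{pq}\hookrightarrow B^{s-d/p+d/2}_{2q}$ when $p<2$), the metric entropy of $\F$ with respect to $d_L$ satisfies $\log N(\varepsilon,\F,d_L)\lesssim \varepsilon^{-\alpha}$ with $\alpha<2$ precisely under the hypothesis $s>\max\{d/2,d/p\}-1$, so the Dudley entropy integral is finite and vanishes as $\delta\to 0$. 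To upgrade this to asymptotic equicontinuity of $\G_T$, I would establish sub-Gaussian increments uniformly in $T$ by combining Bernstein's inequality for the continuous martingale $M(f-h)$ with exponential concentration of the ergodic averages $T^{-1}\int_0^T g_{f-h}^2(X_t)\ud t$ around $\|g_{f-h}\|_{\mu}^2$, the latter following from spectral-gap/PDE estimates for $L$ on $L^2(\mu)$ of the kind developed in \cite{nicklNonparametricStatisticalInference2020}. A generic chaining argument then yields the required control of $\sup_{d_L(f,h)\leq\delta}|\G_T(f)-\G_T(h)|$.

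The main obstacle is precisely this last step: Bernstein alone gives a sub-Gaussian bound with respect to the $L^{\infty}$-norm of $g_{f-h}$, which only delivers the weaker condition $s>d/p+d/2-1$ through chaining. Recovering the sharp $s>\max\{d/2,d/p\}-1$ requires bootstrapping from the $L^{\infty}$-metric to the intrinsic $L^2(\mu)$-metric $d_L$ on all chaining scales, via uniform-in-$T$ exponential concentration of quadratic variations. This is the main technical work, and crucially avoids any assumption of reversibility by relying on PDE smoothing rather than functional inequalities.
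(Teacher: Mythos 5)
Your finite-dimensional argument matches the paper's, with one caveat: for $d\geq 2$ a function $u_f=L^{-1}[\overline{f}]\in H^2$ need not be $C^2$, so the classical It\^{o} formula does not apply directly. The paper uses the It\^{o}--Krylov formula, which requires $u_f\in W^{2,r}$ for some $r>d$, and the Besov/Sobolev embeddings under $s>\max\{d/2,d/p\}-1$ are needed precisely to place $L^{-1}[\overline{f}]$ in such a space (not merely to make the boundary term bounded). You should also note that continuity of $x\mapsto g_f(x)^{\top}g_h(x)$, needed for the ergodic limit of the bracket, itself requires a Sobolev embedding into $C^1$.

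The genuine gap is in the asymptotic equicontinuity step, which you correctly identify as the heart of the matter but do not carry out. Your plan --- chaining in the intrinsic metric $d_L(f,h)=\|g_{f-h}\|_{L^2(\mu)}$, with uniform-in-$T$ sub-Gaussianity obtained from exponential concentration of the quadratic variations $T^{-1}\int_0^T\|g_{f-h}(X_t)\|^2\ud t$ about $\|g_{f-h}\|_{\mu}^2$ --- is deferred as ``the main technical work,'' and it is far from routine: one would need such concentration simultaneously at every chaining scale over an infinite class, which amounts to a second uniform limit theorem (and is closely tied to the open case $p<2$, $s\in(\frac d2-1,\frac dp-1]$ discussed in the paper). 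More importantly, your reason for taking this harder route rests on a miscalculation. The deterministic bound $\langle M(f-h)\rangle_T\leq \Lambda T\sum_i\|\partial_i L^{-1}[f-h]\|_{\infty}^2$ together with the exponential martingale inequality makes $T^{-1/2}M_T$ sub-Gaussian for the sup-norm pseudo-metric $\rho_L$ of \eqref{Eq: d_L definition}; the 2-smoothing estimate (Lemma \ref{Lemma: 2-smoothing estimate}) gives $\rho_L(f,h)\lesssim\|f-h\|_{B^{-1+\gamma}_{\infty\infty}}$ for arbitrarily small non-integer $\gamma>0$; and the entropy numbers of the embedding $B^s_{pq}\hookrightarrow B^{-1+\gamma}_{\infty\infty}$ (Lemma \ref{Lemma: covering number bounds}) yield $\log N(\F,\|\cdot\|_{B^{-1+\gamma}_{\infty\infty}},\varepsilon)\simeq\varepsilon^{-d/(s+1-\gamma)}$ as soon as $s+1-\gamma>d/p$, with no further loss from the change of integrability index $p\to\infty$. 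Dudley's bound then converges exactly when $s+1-\gamma>d/2$, so the ``$L^{\infty}$ route'' already delivers the sharp condition $s>\max\{d/2,d/p\}-1$, not the $s>d/p+d/2-1$ you claim. The loss of $d/p$ you anticipate occurs only if one first bounds the weak norm by a $B^s_{pq}$-type norm via embeddings and then computes entropy; computing the metric entropy of $\F$ directly in the weak norm avoids it. With this correction your concentration-of-quadratic-variations machinery is unnecessary; without it, your proof of asymptotic equicontinuity is incomplete.
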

	\begin{remark}
		The theorem also holds (with an identical proof) in the case $d=1$ whenever $s>0$. Note that the empirical process is more regular than in the i.i.d. case, which requires $s>d/2$ in any dimension for the limit to even be tight on $\ell^{\infty}(\F)$.
	\end{remark}
	The next proposition establishes that the assumption $s>\frac{d}{2}-1$ is necessary and sufficient for the limit process $\G$ to exist as a tight Borel measure on $\ell^{\infty}(\F)$. 		
	\begin{proposition}\label{Prop: sharpness of CLT}
		Let $d\geq2$, and let $b,\sigma,\mu=\mu_{b,\sigma},L^{-1}=L^{-1}_{b,\sigma}$ be as in Theorem \ref{Thm: Main CLT}. Let $1\leq p,q\leq \infty$, $s\geq0$, and let $\F$ be the unit ball of $B^s_{pq}(\Td)$. Then the centred Gaussian process $\G$ defined by (\ref{Eq: limit process cov kernel}) is a tight Borel measure on $\ell^{\infty}(\F)$ if and only if $s>\frac{d}{2}-1$.
	\end{proposition}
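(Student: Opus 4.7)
The plan is to characterise tightness of $\G$ through its intrinsic pseudo-metric
$$ \rho(f,g)^2 := E(\G(f)-\G(g))^2 = \|\sigma^{\top}\nabla L^{-1}[\overline{f-g}]\|_{L^2(\mu)}^2. $$
Since $\G$ is centred Gaussian, standard theory (e.g.\ \cite[Section 2.3]{gineMathematicalFoundationsInfinitedimensional2016}) says $\G$ defines a tight Borel probability measure on $\ell^{\infty}(\F)$ if and only if $(\F,\rho)$ is totally bounded and $\G$ admits a $\rho$-uniformly continuous bounded version. For index sets with polynomial entropy (as is the case here), this is sandwiched between Dudley's entropy integral from above and Sudakov's minoration from below, so the problem reduces to identifying $\rho$ and computing the metric entropy of $\F$ under $\rho$.

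The key technical step, which I expect to be the main obstacle, is to establish $\rho(f,g) \simeq \|\overline{f-g}\|_{H^{-1}}$. For $h\in L^2_\mu$ set $u:=L^{-1}h\in H^2$; using $\sigma\sigma^{\top}=2a$, the strict ellipticity of $a$, and two-sided bounds on $\mu$, one computes
$$ \rho(h,0)^2 = \int |\sigma^{\top}\nabla u|^2\,\ud\mu = 2\int (\nabla u)^{\top} a(\nabla u)\,\ud\mu \simeq \|\nabla u\|_{L^2}^2. $$
The upper bound $\|\nabla u\|_{L^2}\lesssim\|h\|_{H^{-1}}$ is immediate from the $2$-smoothing property of $L^{-1}$ (Lemma \ref{Lemma: 2-smoothing estimate}) applied to the pair $H^{-1}\to H^1$. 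For the matching lower bound, testing $\langle h,\phi\rangle_{L^2}=\langle Lu,\phi\rangle_{L^2}$ against $\phi\in H^1$ and integrating by parts,
$$ \langle h,\phi\rangle_{L^2} = -\int (\nabla\phi)^{\top} a(\nabla u)\,\ud x + \int \phi\,\tilde{b}\cdot\nabla u\,\ud x, $$
so by Cauchy--Schwarz $|\langle h,\phi\rangle_{L^2}|\lesssim(\|a\|_{\infty}+\|\tilde{b}\|_{\infty})\|\phi\|_{H^1}\|\nabla u\|_{L^2}$, whence $\|h\|_{H^{-1}}\lesssim\|\nabla u\|_{L^2}$, completing the equivalence.

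With $\rho$ identified, I apply classical entropy estimates for compact Besov embeddings (see e.g.\ \cite[Section 4.3.5]{gineMathematicalFoundationsInfinitedimensional2016}): whenever $B^s_{pq}(\Td)\hookrightarrow H^{-1}(\Td)$ compactly,
$$ \log N(\F, \|\cdot\|_{H^{-1}}, \varepsilon) \simeq \varepsilon^{-d/(s+1)} \quad \text{as } \varepsilon\downarrow 0, $$
and the same rate governs the entropy of the $\mu$-mean-zero projection of $\F$, which is a bounded subset of $B^s_{pq}$ with the same scaling. The Dudley integral $\int_0^1 \varepsilon^{-d/(2(s+1))}\,\ud\varepsilon$ converges iff $d/(s+1)<2$, i.e.\ iff $s>d/2-1$; this range simultaneously guarantees compactness of $B^s_{pq}\hookrightarrow H^{-1}$ for every $p,q\in[1,\infty]$ (the extremal case $p=1$ requiring $s>d/p-d/2-1 = d/2-1$). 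Hence $s>d/2-1$ implies tightness of $\G$. Conversely, if $s\leq d/2-1$, then either $\F$ already fails to be totally bounded in $H^{-1}$ (hence in $\rho$), or Sudakov's minoration yields
$$ E\sup_{f\in\F}\G(f) \gtrsim \sup_{\varepsilon>0}\varepsilon\sqrt{\log N(\F,\rho,\varepsilon)} \gtrsim \sup_{\varepsilon>0}\varepsilon^{1-d/(2(s+1))} = \infty, $$
which precludes $\G$ from defining a tight Borel probability measure on $\ell^{\infty}(\F)$.
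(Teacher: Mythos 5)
Your overall strategy coincides with the paper's: work with the intrinsic pseudo-metric $\rho_{\G}$, establish the two-sided comparison $\rho_{\G}(f,g)\simeq\|f-g\|_{H^{-1}}$ (upper bound from the $2$-smoothing Lemma \ref{Lemma: 2-smoothing estimate}, lower bound from ellipticity), and then combine the two-sided Besov entropy asymptotics with Dudley's bound for sufficiency and Sudakov's minoration for necessity. Your derivation of the lower bound $\|h\|_{H^{-1}}\lesssim\|\nabla L^{-1}h\|_{L^2}$ by testing the divergence-form equation directly against $\phi\in H^1$ is a slightly cleaner route than the paper's, which first invokes the Poincar\'{e} inequality to control $\|L^{-1}h\|_{H^1}$ by $\|\nabla L^{-1}h\|_{L^2}$ and then uses continuity of $L:H^1\to H^{-1}$; your version sees only $\nabla u$ on the right-hand side and so bypasses Poincar\'{e}. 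These are cosmetic differences.

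There is, however, a genuine gap at the endpoint $s=\tfrac{d}{2}-1$, which your ``if and only if'' claim must cover (it is an admissible value since $s\geq0$ and $d\geq2$). At $s=\tfrac{d}{2}-1$ the exponent in your Sudakov computation is $1-\tfrac{d}{2(s+1)}=0$, so $\sup_{\varepsilon>0}\varepsilon\sqrt{\log N(\F,\rho,\varepsilon)}\simeq 1<\infty$ and the minoration yields no contradiction; and for $p\geq2$ the embedding $B^s_{pq}\hookrightarrow H^{-1}$ is still compact at this $s$, so the first horn of your dichotomy (failure of total boundedness) does not apply either. Thus neither branch of your argument establishes non-tightness at the boundary. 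This is precisely why the paper treats $s=\tfrac{d}{2}-1$ separately, via a refinement of the Sudakov argument (it cites Remark 12 of \cite{nicklBernsteinMisesTheorems2020}); roughly, one must exploit the second-order behaviour of the entropy (or pass to a sub-collection of $\F$ on which the process contains arbitrarily large finite sets of nearly orthogonal Gaussians of non-vanishing variance) rather than the bare polynomial rate. Away from this endpoint your argument is sound: for $s>\tfrac{d}{2}-1$ the Dudley integral converges for all $1\leq p,q\leq\infty$ as you note, and for $s<\tfrac{d}{2}-1$ the strict inequality makes the Sudakov exponent negative, so the dichotomy does close the case.
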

	Thus in the case $p\in[2,\infty]$, the condition $s>\frac{d}{2}-1$ in Theorem \ref{Thm: Main CLT} is sharp. However, in the case $1\leq p<2$, when \mbox{$s\in(\frac{d}{2}-1,\frac{d}{p}-1]$}, while the limit process $\G$ exists as a tight Borel measure on $\ell^{\infty}(\F)$, we do not know whether the empirical process $\G_T$ converges to it. For the analogous parameter configuration in the i.i.d. setting, $s\in\left(d/2,d/p\right]$, the limit exists as a tight Borel measure, but the empirical process does not in fact converge to this limit (\cite[Proposition 4.4.7]{gineMathematicalFoundationsInfinitedimensional2016}). In the scalar diffusion setting, it was established in \cite{vandervaartDonskerTheoremsDiffusions2005} that no such phenomenon takes place, and the empirical process converges to the limit experiment whenever it exists. What occurs in the multidimensional case is currently unclear; see Section \ref{Subsection: Discussion} for a discussion of this matter. 
	
	Theorem \ref{Thm: Main CLT} is related to the Bernstein-von-Mises results in \cite{nicklNonparametricStatisticalInference2020} (Theorems 5 and 6) in the case $\sigma \equiv\mathrm{id}$ and $d\leq3$. In place of the occupation measure $\hat{\mu}_T$, their results are for a sequence $\mu_{\hat{b}_T}$, where $\hat{b}_T$ is a suitable estimator of the drift field $b$ (the posterior mean), and $\mu_{\hat{b}_T}$ denotes the invariant measure associated to the drift field $\hat{b}_T$. In the frequentist literature, a uniform central limit theorem similar to Theorem \ref{Thm: Main CLT} was deduced in the scalar setting for a suitable kernel estimator $\hat{b}_T$ in \cite{aeckerle-willemsSupnormAdaptiveDrift2022}, building on the results of \cite{vandervaartDonskerTheoremsDiffusions2005}.
	
	As an application of Theorem \ref{Thm: Main CLT}, we derive convergence rates for the occupation measure $\hat{\mu}_T$ in the Wasserstein distance in dimensions $d\leq3$. Recall that the Wasserstein-1 distance between Borel measures $\nu,\rho$ on $\Td$ is defined as
	\begin{equation}\label{Eq: W_1 definition}
		W_1(\nu,\rho) = \inf_{\gamma\in\mathcal{C}(\nu,\rho)} \int_{\Td\times\Td} \|x-y\|\,\ud\gamma(x,y),
	\end{equation}
	where $\|\cdot\|$ denotes the Euclidean norm on $\Td$ and $\mathcal{C}(\nu,\rho)$ is the set of all \emph{couplings} of $\nu$ and $\rho$: Borel measures on $\Td\times\Td$ whose first and second marginals (on $\Td$) are $\nu$ and $\rho$ respectively. By the Kantorovich-Rubinstein duality formula, one may rewrite the Wasserstein-1 distance as
	\begin{equation}\label{Eq: K-R duality}
		W_1(\hat{\mu}_T, \mu) = \sup_{f\in\mathrm{Lip}_1(\Td)} \left[\int f\ud\hat{\mu}_T - \int f\ud\mu \right],
	\end{equation}
	where $\mathrm{Lip}_1(\Td)$ is the class of all functions $f:\Td\to\R$ that are 1-Lipschitz.
	
	\begin{theorem}\label{Thm: Wasserstein CLT}
		Let $d\leq3$. Let $b,\sigma$ be as in Theorem \ref{Thm: Main CLT}; let $\mu$ be the invariant measure of the associated diffusion process and let $\hat{\mu}_T$ be the time-$T$ occupation measure as defined in (\ref{Eq: ergodic property of mu, sets}). Then
		$$ \sqrt{T}\,W_1(\hat{\mu}_T, \mu) \to^{\law} \|\G\|_{\F} \quad \text{as }T\to\infty,$$
		where $\G$ is the centred Gaussian process on $\F:=\mathrm{Lip}_1(\Td)\cap L^2_{\mu}(\Td)$ with covariance kernel given by (\ref{Eq: limit process cov kernel}), and $\|\G\|_{\F}$ is finite almost surely.
		\begin{proof}
			By linearity, one may replace $\mathrm{Lip}_1(\Td)$ in (\ref{Eq: K-R duality}) with the class $\F$ defined in the statement; observe that $\F$ is a bounded subset of $B^1_{\infty\infty}(\Td)$. Theorem \ref{Thm: Main CLT} now applies with $s=1$ and $d\leq3$, and this gives the result after noting that $W_1(\hat{\mu}_T,\mu)$ is measurable (as the supremum in (\ref{Eq: K-R duality}) is attained over a countable set) and $\|\cdot\|_{\F}:\ell^{\infty}(\F)\to\R$ is continuous.
			
			Lastly, by considering the covariance structure of $\G$, one shows that $\|\G\|_{\F}$ is sub-Gaussian with respect to a multiple of the distance $\rho_L$ defined in (\ref{Eq: d_L definition}) below. Analogous arguments to those of Section \ref{Subsection: Proof, Asymptotic Equicontinuity} establish that $\|\G\|_{\F}$ is almost surely finite.
		\end{proof}
	\end{theorem}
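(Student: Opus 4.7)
The plan is to apply Theorem~\ref{Thm: Main CLT} to a suitable class of $1$-Lipschitz functions and transfer the resulting weak convergence to the Wasserstein distance via the Kantorovich--Rubinstein duality (\ref{Eq: K-R duality}). My first step is to observe that $f \mapsto \hat{\mu}_T(f) - \mu(f)$ is invariant under addition of constants to $f$, so the supremum in (\ref{Eq: K-R duality}) is unchanged if $\mathrm{Lip}_1(\Td)$ is replaced by the smaller class $\F := \mathrm{Lip}_1(\Td) \cap L^2_\mu(\Td)$. I then need to check that $\F$ is a bounded subset of $B^1_{\infty\infty}(\Td)$: functions in $\F$ have Lipschitz constant at most $1$ and, integrating to zero against a measure equivalent to Lebesgue, must vanish somewhere on the compact torus, so they are uniformly bounded; the bound then follows from the embedding of $W^{1,\infty}(\Td)$ into $B^1_{\infty\infty}(\Td)$.

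With this setup in hand, I would invoke Theorem~\ref{Thm: Main CLT} with $s=1$ and $p=q=\infty$. The regularity condition $s > d/2 - 1$ becomes $d < 4$, which is exactly the hypothesis $d \leq 3$, so the theorem yields $\G_T \wconv \G$ in $\ell^{\infty}(\F)$. Since the supremum-norm functional $H \colon z \mapsto \|z\|_\F$ is continuous on $\ell^{\infty}(\F)$, the continuous mapping theorem gives $\sqrt{T}\,W_1(\hat{\mu}_T,\mu) = \|\G_T\|_\F \wconv \|\G\|_\F$. I would also note that $W_1(\hat{\mu}_T,\mu)$ is genuinely measurable (so one can speak of ordinary, not just outer, expectation/probability): uniform continuity of Lipschitz functions on the compact torus lets one restrict the supremum in (\ref{Eq: K-R duality}) to a countable dense subfamily of $\F$.

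The last ingredient is almost sure finiteness of $\|\G\|_\F$. From the covariance kernel (\ref{Eq: limit process cov kernel}), $\G$ is a centred Gaussian process, hence sub-Gaussian with respect to its intrinsic pseudo-distance $\rho(f,g) = \|\sigma^\top \nabla L^{-1}[\overline{f} - \overline{g}]\|_{L^2(\mu)}$, which in turn is controlled by a constant multiple of the pseudo-distance $\rho_L$ introduced for Theorem~\ref{Thm: Main CLT}. Standard Dudley-type chaining applied to the metric entropy of $(\F, \rho_L)$ then yields finiteness, paralleling the computations used for asymptotic equicontinuity in Section~\ref{Subsection: Proof, Asymptotic Equicontinuity}. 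The main obstacle is quantitative: I need to confirm that the two-derivative smoothing of $L^{-1}$ (Lemma~\ref{Lemma: 2-smoothing estimate}) combines with the $B^1_{\infty\infty}$-regularity of $\F$ to produce a convergent entropy integral precisely in the low-dimensional regime $d \leq 3$, the same cutoff that made the CLT applicable to Lipschitz test functions in the first place.
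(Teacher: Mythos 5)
Your proposal is correct and follows essentially the same route as the paper: restrict to $\F=\mathrm{Lip}_1(\Td)\cap L^2_\mu(\Td)$ by translation invariance, note $\F$ is bounded in $B^1_{\infty\infty}(\Td)$, apply Theorem \ref{Thm: Main CLT} with $s=1$ (where $s>d/2-1$ is exactly $d\leq3$), use measurability plus continuity of $\|\cdot\|_\F$, and establish finiteness of $\|\G\|_\F$ via sub-Gaussianity with respect to $\rho_L$ and the same entropy computation as in Section \ref{Subsection: Proof, Asymptotic Equicontinuity}. The only additions are harmless elaborations (e.g.\ the uniform bound on $\F$ and the final entropy check, which is already carried out in the proof of Theorem \ref{Thm: Main CLT}).
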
 
	\begin{remark}(Concentration, finite sample bounds)
		Using Exercise 2.3.1 of \cite{gineMathematicalFoundationsInfinitedimensional2016}, one can show that $\|\G\|_{\F}$ concentrates about its finite expectation with sub-Gaussian tails. As was noted in Lemma 1 of \cite{nicklNonparametricStatisticalInference2020}, a similar analysis of $\|\G_T\|_{\F}$ can be used to derive non-asymptotic sub-Gaussian concentration bounds for $W_1(\hat{\mu}_T,\mu)$. Similar finite sample results were proved in \cite{boissardSimpleBoundsConvergence2011}, using transport-entropy inequalities.
	\end{remark}
	
	In particular, Theorem \ref{Thm: Wasserstein CLT} tells us that in dimensions $d\leq 3$,
	$$W_1(\hat{\mu}_T,\mu) = O_P(T^{-1/2}).$$ 
	When $d=2,3$, the empirical measure in the diffusion model converges faster than for a smooth density in the i.i.d. sampling model, in which the empirical measure converges in $W_1$-distance at a rate of $n^{-1/2}(\log{n})^{1/2}$ for $d=2$ and $n^{-1/3}$ for $d=3$, where $n$ is the sample size (\cite{ajtaiOptimalMatchings1984,dudleySpeedMeanGlivenkoCantelli1968}; see also \cite{fournierRateConvergenceWasserstein2015}). The $d=1$ case of the classical i.i.d. setting, in which $W_1$ convergence does occur at the parametric rate, was comprehensively analysed in \cite{delbarrioCentralLimitTheorems1999}. 
	
	\subsection{Discussion} \label{Subsection: Discussion}
	
	A shortcoming of Theorem \ref{Thm: Main CLT} is that it fails to address the case $1\leq p<2$, \mbox{$s\in(\frac{d}{2}-1,\frac{d}{p}-1]$}. For these parameters, the limit process $\G$ defined by \eqref{Eq: limit process cov kernel} exists as a tight Borel measure on $\ell^{\infty}(\F)$ by Proposition \ref{Prop: sharpness of CLT}. However, the relevance of the limit to the empirical process $\G_T$ is no longer clear, since the It\^{o}-Krylov formula \eqref{Eq: Ito decomposition of G_T} does not necessarily hold. This formula from \cite{krylovControlledDiffusionProcesses1980} appears to be the most general such result for weakly differentiable functions, holding for functions in $W^{2,r}(\Td)$ for any $r>d$. In fact, stochastic chain rules exist for only once weakly differentiable functions, as in \cite{altmeyerFourierMethodsEstimating2017,follmerItoFormulaMultidimensional2000,russoItoFormulaClfunctions1996}; however, in these formulae the quadratic covariation term is replaced by a suitable analogue, and so they do not yield an occupation time formula as in \eqref{Eq: Ito decomposition of G_T} below. Moreover, such formulae require delicate initial conditions, often depending on the function $f$ under consideration, and so are unsuitable for developing uniform central limit theorems.
	
	One may ask whether the assumption of periodicity of the drift field $b$ and the diffusivity $\sigma$ may be removed. In principle, this should be possible by making use of the results of \cite{pardouxPoissonEquationDiffusion2001} concerning the Poisson equation on $\Rd$, as in \cite{aeckerle-willemsSupnormAdaptiveDrift2022}. However, it is not obvious how precisely to adapt our key regularity estimate Lemma \ref{Lemma: 2-smoothing estimate} below; in particular, the choice of which Besov spaces to use must be informed by metric entropy considerations if there is to be any hope of proving the asymptotic equicontinuity condition (\ref{Eq: asymptotic equicontinuity, expectation version}) via chaining methods. One candidate is the family of weighted Besov spaces studied in \cite{haroskeWaveletBasesEntropy2005} and successfully deployed in \cite{nicklBracketingMetricEntropy2007} in the classical i.i.d. empirical process setting. However, this generalisation requires many complicated technicalities to be resolved, and is left as a matter for future research.
	
	\section{Proof of Theorem \ref{Thm: Main CLT}}
	
	For this section, we fix the dimension $d\geq2$, as well as a drift field $b$ and diffusivity $\sigma$ satisfying the conditions of Theorem \ref{Thm: Main CLT}. Fix $1\leq p,q\leq\infty$, \mbox{$s>\left(\max\{d/2,d/p\}-1\right)$} and let $\F\subset B^{s}_{pq}(\Td)$ be bounded. For $f\in\F$, we write $\overline{f} = f - \int f\,\ud\mu$. Let $\G_T$ be as in (\ref{Eq: empirical process definition}).
	
	\subsection{Convergence of Finite-Dimensional Marginals}
	
	Our first task is to establish the covariance structure of the candidate limit process $\G$. By linearity of the process $f\mapsto\G_T(f)$ and the Cram\'{e}r-Wold device, it suffices to consider a single $f\in\F\cap L^2_{\mu}$. The following argument was first used in \cite{bhattacharyaFunctionalCentralLimit1982}; we require a slightly augmented version for the function classes we consider.
	
	As $f\in B^{s}_{pq}$, by Lemma \ref{Lemma: 2-smoothing estimate} we have that $L^{-1}[f]\in B^{2+s}_{pq}$. Using standard Besov embeddings (see, for example, \cite[Proposition 4.3.10]{gineMathematicalFoundationsInfinitedimensional2016}) and the Sobolev-Gagliardo-Nirenberg inequality, for $p\geq2$ we have the sequence of continuous embeddings
	$ B^{s+2}_{pq}\subset B^{s+2-\delta}_{22} = W^{s+2-\delta,2} \subset W^{2,r} $
	for some small $\delta>0$ and some $r>d$. If $1\leq p<2$, there is an extra initial step: we use the embedding 
	$B^{s+2}_{pq}\subset B^{s-\frac{d}{p}+\frac{d}{2}+2}_{2q}$
	and then the fact that $s>\frac{d}{p}-1$.
	Thus for $f\in\F$, $L^{-1}[f]\in W^{2,r}(\Td)$ and so we may apply the It\^{o}-Krylov formula \cite[Theorem 2.10.1]{krylovControlledDiffusionProcesses1980} to obtain
	\begin{equation}\label{Eq: Ito decomposition of G_T}
		\G_T(f) = \frac{1}{\sqrt{T}}\left(L^{-1}[f](X_T) - L^{-1}[f](x)\right) - \frac{1}{\sqrt{T}}\int_0^T\nabla L^{-1}[f](X_s)^\top\sigma(X_s)\ud W_s. 
	\end{equation}
	Observe that $\sup_{f\in\F}\|L^{-1}[f]\|_{\infty}<\infty$ so the first term is $O_P(T^{-1/2})$, and is thus asymptotically negligible.
	Now let $T_n$ be any sequence increasing to infinity. Define processes
	$$M^n_t := \frac{1}{\sqrt{T_n}}\int_0^{T_nt}\nabla L^{-1}[f](X_s)^{\top}\sigma(X_s)\cdot \ud W_s, \quad t\geq0. $$
	As $L^{-1}[f]\in C^1$ by a further Sobolev embedding, each $M^n$ is in fact a continuous martingale with quadratic variation given by
	\begin{equation*}
		\langle M^n \rangle_t = \frac{1}{T_n}\int_0^{T_nt}\left\| \sigma(X_s)^\top\nabla L^{-1}[f](X_s)\right\|_2^2\,\ud s \,\, \overset{P}{\longrightarrow} \,\, t\|\sigma^\top \nabla L^{-1}[f]\|^2_{\mu}
	\end{equation*}
	as $n\to\infty$, using the ergodic average property (\ref{Eq: ergodic property of mu, functions}), which is proved for $f\in\F$ as in \cite[Lemma 6]{nicklNonparametricStatisticalInference2020}. Then by the martingale central limit theorem (\cite[Theorem 7.1.4]{ethierMarkovProcessesCharacterization1986}), as $T\to\infty$ we have that
	$$ \frac{1}{\sqrt{T}}\int_0^T\nabla L^{-1}[f](X_s)^\top\sigma(X_s)\cdot\ud W_s \to^{\law} N\left(0, \|\sigma^\top \nabla L^{-1}[f]\|^2_{\mu}\right),$$
	and so by (\ref{Eq: Ito decomposition of G_T}) and linearity, we have for any $k\in\mathbb{N},f_1,\ldots,f_k\in\F$, that as $T\to\infty$,
	$\left(\G_T(f_1),\ldots,\G_T(f_m)\right) \to^{\law} N_m(0,C),$
	where the covariance matrix $C$ is given by
	$ C_{ij} = \left\langle \sigma^\top\nabla L^{-1}[\bar{f_i}],\sigma^{\top}\nabla L^{-1}[\bar{f_j}] \right\rangle_{\mu}$.
	Thus the finite-dimensional marginals of $\G_T$ converge to those of $\G$ as defined by \eqref{Eq: limit process cov kernel}.
	
	\subsection{Asymptotic Equicontinuity}\label{Subsection: Proof, Asymptotic Equicontinuity}
	
	It now remains to prove that the asymptotic equicontinuity condition (\ref{Eq: asymptotic equicontinuity condition}) holds for $\G_T$, for some pseudo-distance $\rho$ with respect to which $\F$ is totally bounded. By Markov's inequality, it is sufficient to prove that
	\begin{equation}\label{Eq: asymptotic equicontinuity, expectation version}
		\lim_{\delta\to0}\limsup_{T\to\infty}\,E\left[ \sup_{f,g\in\F:\rho(f,g)\leq\delta} \big|\G_T(f)-\G_T(g)\big| \right] = 0.
	\end{equation}
	We first address the choice of pseudo-distance. Recall the decomposition of $\G_T(f)$ in (\ref{Eq: Ito decomposition of G_T}):
	\begin{equation*}
		\G_T(f) = \frac{1}{\sqrt{T}}\underbrace{\left(L^{-1}[f](X_T) - L^{-1}[f](x)\right) }_{=:Y_T(f)}- \frac{1}{\sqrt{T}}\underbrace{\int_0^T\nabla L^{-1}[f](X_s)^\top\sigma(X_s)\ud W_s}_{=:Z_T(f)}
	\end{equation*}
	where for fixed $f\in\F$, $(Z_t(f))_{t\geq0}$ is a continuous local martingale. As noted previously, $T^{-1/2}Y_T(f)$ is asymptotically negligible and it suffices to prove the asymptotic equicontinuity condition (\ref{Eq: asymptotic equicontinuity, expectation version}) with $\frac{1}{\sqrt{T}}Z_T$ in place of $\G_T$. 
	
	The quadratic variation of $Z(f)$ can be bounded above:
	$$ \langle Z(f) \rangle_T = \int_0^T \|\sigma(X_s)^{\top}\nabla L^{-1}[f](X_s)\|^2\,\ud s \leq \Lambda \int_0^T \|\nabla L^{-1}[f](X_s)\|^2\,\ud s,$$
	where $\Lambda<\infty$ is as in (\ref{Eq: strict ellipticity definition}). Thus from the argument of Lemma 1 in \cite{nicklNonparametricStatisticalInference2020}, the process $\left(T^{-1/2}Z_T(f)\right)_{f\in\F}$ is sub-Gaussian with respect to the pseudo-distance $\rho_L$ given by
	\begin{equation}\label{Eq: d_L definition}
		\rho_L^2(f,g) = \Lambda \sum_{i=1}^d \left\| \partial_i L^{-1}[f-g]\right\|_{\infty}^2.
	\end{equation}
	The next lemma, similar to Lemma 3 of \cite{nicklNonparametricStatisticalInference2020}, establishes a useful upper bound for $\rho_L$.
	
	\begin{lemma}\label{Lemma: Besov bound for d_L}
		Let $b,\sigma\in C^{1+\beta}(\Td)$ for some $\beta>0$. Then for any non-integer $\gamma$ such that $0<\gamma\leq\beta$,
		$$ \rho_L(f,g) \lesssim \|f-g\|_{B^{-1+\gamma}_{\infty\infty}},$$
		where the constant depends on $b,\sigma,d,\beta,\gamma$.
	\end{lemma}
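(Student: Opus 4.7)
The plan is to reduce the bound on $\rho_L$ to a regularity estimate for $L^{-1}$ applied to the difference $h:=f-g$, and then invoke the 2-smoothing property of $L^{-1}$ on the Hölder–Besov scale $B^{\cdot}_{\infty\infty}$. Precisely, since $\rho_L^2(f,g) = \Lambda \sum_{i=1}^d \|\partial_i L^{-1}[h]\|_\infty^2$ and the sum has only $d$ terms, it suffices to prove the componentwise bound
\[
\|\partial_i L^{-1}[h]\|_\infty \;\lesssim\; \|h\|_{B^{-1+\gamma}_{\infty\infty}}, \qquad 1 \leq i \leq d,
\]
uniformly in $i$; squaring, summing and taking square roots then yields the lemma, with constant depending on $\Lambda$ and $d$. (As $\rho_L$ only involves $\nabla L^{-1}[h]$, one may work with the mean-zero representative $\overline{h} = h - \int h\,\ud\mu \in L^2_\mu$ without affecting the bound, so $L^{-1}[h]$ is well defined in the sense of \eqref{Eq: solution operator definition}.)

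For the componentwise bound I would appeal directly to Lemma \ref{Lemma: 2-smoothing estimate} with parameters $p=q=\infty$ and regularity index $s_0 := -1+\gamma$. Since $b,\sigma\in C^{1+\beta}$ and $0<\gamma\leq\beta$, the coefficients of $L$ have enough Hölder regularity for the 2-smoothing estimate to apply at this index, and it delivers
\[
\|L^{-1}[h]\|_{B^{1+\gamma}_{\infty\infty}} \;\lesssim\; \|h\|_{B^{-1+\gamma}_{\infty\infty}},
\]
with constant depending on $b,\sigma,d,\beta,\gamma$. Because $\gamma$ is non-integer and positive, $1+\gamma$ is also non-integer and positive, so the Besov space $B^{1+\gamma}_{\infty\infty}$ coincides (with equivalent norms) with the Hölder space $C^{1+\gamma}$, as recalled in the notation section. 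Hence
\[
\|\partial_i L^{-1}[h]\|_\infty \;\leq\; \|L^{-1}[h]\|_{C^{1+\gamma}} \;\lesssim\; \|L^{-1}[h]\|_{B^{1+\gamma}_{\infty\infty}} \;\lesssim\; \|h\|_{B^{-1+\gamma}_{\infty\infty}},
\]
which is exactly the componentwise bound needed.

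The only real work is to ensure that Lemma \ref{Lemma: 2-smoothing estimate} is applicable at the negative regularity level $-1+\gamma$ in the $B^{\cdot}_{\infty\infty}$ scale; this is where the hypothesis $b,\sigma\in C^{1+\beta}$ and the constraint $\gamma\leq\beta$ enter, as Schauder-type estimates for second order elliptic operators require the coefficients to be at least $C^\gamma$ for a gain of two derivatives into $C^{1+\gamma}$ to hold. Once that is in hand, the argument above is essentially bookkeeping, and the remaining constants (the factor $\Lambda$, the dimension $d$, and the Besov–Hölder equivalence constant) are all absorbed into the final $\lesssim$.
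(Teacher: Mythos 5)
Your proof is correct and follows essentially the same route as the paper: bound $\sum_i\|\partial_i L^{-1}[h]\|_\infty^2$ via the Hölder–Besov identification $C^{1+\gamma}\simeq B^{1+\gamma}_{\infty\infty}$ (valid since $1+\gamma\notin\mathbb{N}$), then apply the 2-smoothing estimate of Lemma \ref{Lemma: 2-smoothing estimate} with $p=q=\infty$ to pass from $\|L^{-1}[h]\|_{B^{1+\gamma}_{\infty\infty}}$ to $\|h\|_{B^{-1+\gamma}_{\infty\infty}}$. No further comment is needed.
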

	Thus $T^{-1/2}Z_T$ is also sub-Gaussian with respect to (a multiple of) the $B^{-1+\gamma}_{\infty\infty}$-norm, for arbitrarily small $\gamma>0$; we now confirm (\ref{Eq: asymptotic equicontinuity, expectation version}) with $T^{-1/2}Z_T$ in place of $\G_T$, with respect to the $B^{-1+\gamma}_{\infty\infty}$-norm. Note that $\F$ is totally bounded with respect to this norm so long as $\gamma \leq s+1$, which we assume. For any pseudo-distance $\rho$, let $N(\F,\rho,\varepsilon)$ denote the covering number of $\F$ by $\rho$-balls of radius $\varepsilon$. By Dudley's chaining bound for sub-Gaussian processes (see e.g. \cite[Theorem 2.3.7]{gineMathematicalFoundationsInfinitedimensional2016}), if
	\begin{equation}\label{Eq: entropy integral}
		\int_0^{\infty} \sqrt{\log{N(\F,\|\cdot\|_{B^{-1+\gamma}_{\infty\infty}},\varepsilon)}}\,\ud\varepsilon <\infty,
	\end{equation}
	then
	\begin{align*}
		E\left[ \sup_{f,g\in\F:\|f-g\|_{B^{-1+\gamma}_{\infty\infty}}\leq\delta}\frac{1}{\sqrt{T}} \big|Z_T(f)-Z_T(g)\big| \right] &\lesssim \int_0^{\delta}  \sqrt{\log{2N(\F,\|\cdot\|_{B^{-1+\gamma}_{\infty\infty}},\varepsilon)}}\,\ud\varepsilon,
	\end{align*}
	where the constant is universal.
	In view of the finiteness of the entropy integral in (\ref{Eq: entropy integral}), the right-hand side converges to zero as $\delta\to0$. Since the constants in the previous display do not depend on $T$, it follows that (\ref{Eq: entropy integral}) implies (\ref{Eq: asymptotic equicontinuity, expectation version}) for $T^{-1/2}Z_T$, and thus for $\G_T$. To prove \eqref{Eq: entropy integral}, we make use of the following covering number bounds.
	
	\begin{lemma}\label{Lemma: covering number bounds}
		Let $1\leq p,p'\leq\infty$ and let $s,t\in\R$ be such that $s-t>\max\left(\frac{d}{p}-\frac{d}{p'},0\right)$. Let $\mathcal{B}=\mathcal{B}(s,p,M)$ be the norm-ball of radius $M$ in $B^s_{p\infty}(\Td)$. Then $\mathcal{B}$ satisfies
		$$ \log N(\mathcal{B}, \|\cdot\|_{B^t_{p'1}},\varepsilon) \simeq \left(\frac{M}{\varepsilon}\right)^{\frac{d}{s-t}} \quad \forall \varepsilon>0,$$
		where the constants depend on $p,p',s,t,d$.
		\begin{remark}\label{key}
			Using the embeddings $B^s_{pq}\subset B^s_{p\infty}$ and $B^t_{p'1}\subset B^t_{p'q'}$ for any $1\leq q,q'\leq \infty$, the lemma also holds with any choices of third Besov index.
		\end{remark}
	\end{lemma}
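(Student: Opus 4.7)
The lemma is a classical metric-entropy estimate for embeddings of Besov balls, going back to Birman--Solomyak and sharpened by Triebel and Edmunds--Triebel. My plan is to prove it by reducing to a sequence-space estimate via a wavelet characterization of the Besov spaces and then invoking known entropy estimates for weighted sequence-space embeddings.

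First, fix an orthonormal periodic wavelet basis $\{\psi_{j,k}\}$ on $\Td$ indexed by resolution levels $j\geq 0$, with $\#\{k\}\simeq 2^{jd}$ wavelets at level $j$. The wavelet characterization of periodic Besov spaces (see Chapter 4 of \cite{gineMathematicalFoundationsInfinitedimensional2016}) yields the norm equivalence
$$ \|f\|_{B^s_{pq}} \simeq \Bigl(\sum_{j\geq 0} 2^{jq(s+\frac{d}{2}-\frac{d}{p})}\Bigl(\sum_k |\langle f,\psi_{j,k}\rangle|^p\Bigr)^{q/p}\Bigr)^{1/q}, $$
so the problem reduces to estimating the metric entropy of the identity embedding between two weighted sequence spaces $b^s_{p\infty}$ and $b^t_{p'1}$, restricted to the ball of radius $M$.

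For the upper bound, I would truncate at a resolution $J$ to be chosen. At each level $j\leq J$, the coefficient block lies in a ball of $\ell_p^{N_j}$, $N_j\simeq 2^{jd}$, to be covered by $\ell_{p'}^{N_j}$-balls; for this I invoke the sharp finite-dimensional entropy estimates for the identity $\ell_p^{N}\to \ell_{p'}^{N}$ due to Sch\"{u}tt (see Chapter 3 of Edmunds--Triebel). For levels $j>J$, I bound the tail directly in $\|\cdot\|_{B^t_{p'1}}$, using that the weights $2^{j(t+d/2-d/p')}$ are dominated by $2^{j(s+d/2-d/p)}$ times a summable factor thanks to $s-t>\max(d/p-d/p',0)$. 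Optimizing by choosing $J$ so that $2^{-J(s-t-\max(d/p-d/p',0))}\simeq \varepsilon/M$ (effectively $2^{Jd}\simeq (M/\varepsilon)^{d/(s-t)}$) gives the upper bound $\log N \lesssim (M/\varepsilon)^{d/(s-t)}$.

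For the matching lower bound, I would concentrate on a single scale $j=J$: place signs $\pm \eta$ on all $2^{Jd}$ coefficients at level $J$ with $\eta\simeq M 2^{-J(s+d/2-d/p)}$, yielding a family of elements in $\mathcal{B}$. By a Varshamov--Gilbert packing argument on the Hamming cube, one extracts $\exp(c\cdot 2^{Jd})$ such functions with pairwise $\|\cdot\|_{B^t_{p'1}}$-distance $\gtrsim \varepsilon$ for $J$ as above, giving the matching lower bound. The main obstacle is the careful bookkeeping of the various weighted sequence-space norms and verifying that the finite-dimensional Sch\"{u}tt estimates combine cleanly across scales; the strict inequality $s-t>\max(d/p-d/p',0)$ ensures summability and avoids the critical regime, so the standard references apply directly.
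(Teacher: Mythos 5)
Your proposal is correct in strategy but takes a genuinely different route from the paper. The paper's proof is essentially two lines: it quotes Theorem 2 in Section 3.3 of \cite{edmundsFunctionSpacesEntropy1996} for the two-sided entropy-number asymptotics $e_k(\mathcal{B},\|\cdot\|_{B^t_{p'1}})\simeq Mk^{-(s-t)/d}$ under exactly the hypothesis $s-t>\max(d/p-d/p',0)$, and then converts entropy numbers into covering numbers via the equivalence $e_k\simeq k^{\alpha}\Leftrightarrow \log N(\mathcal{B},\rho,\varepsilon)\simeq \varepsilon^{1/\alpha}$. What you outline --- wavelet reduction to weighted sequence spaces, truncation plus Sch\"{u}tt's finite-dimensional estimates for the upper bound, a Varshamov--Gilbert packing at a single scale for the lower bound --- is in effect a proof of that cited theorem rather than an application of it. Nothing is wrong with this as a strategy; it is self-contained and is how the quoted result is established in the literature, but it does substantially more work than the paper needs.

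Two points in your sketch do not close as written. First, the normalization in your lower bound is off: if all $2^{Jd}$ coefficients at level $J$ equal $\pm\eta$, then the $B^s_{p\infty}$ norm is $\simeq \eta\,2^{J(s+d/2)}$, since the $\ell_p$ sum over the $2^{Jd}$ positions contributes a factor $2^{Jd/p}$ cancelling the $-d/p$ in the weight; you therefore need $\eta\simeq M2^{-J(s+d/2)}$, and with your choice $\eta\simeq M2^{-J(s+d/2-d/p)}$ the functions leave $\mathcal{B}$. The same cancellation occurs in the $B^t_{p'1}$ separation computation, so the final exponent $d/(s-t)$ is unaffected, but the bookkeeping must be corrected. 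Second, and more substantively, when $p<p'$ your two prescriptions for $J$ are inconsistent: the tail estimate forces $2^{-J(s-t-d/p+d/p')}\simeq\varepsilon/M$, i.e.\ $2^{Jd}\simeq(M/\varepsilon)^{d/(s-t-d/p+d/p')}$, which exceeds the coefficient budget $(M/\varepsilon)^{d/(s-t)}$ you claim; covering all coefficients up to that level at uniform resolution yields a strictly worse exponent. Closing the gap requires the intermediate regime of Sch\"{u}tt's estimates ($\log N_j\le k\le N_j$) together with a level-dependent allocation of covering radii so that the upper levels contribute only $o(N_j)$ bits each. You flag this as ``the main obstacle,'' and rightly so, but it is the actual mathematical content of the upper bound in the case $p<p'$ rather than routine bookkeeping; it is precisely what the argument in \cite{edmundsFunctionSpacesEntropy1996} supplies and what the paper sidesteps by citation.
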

	
	We now bound the entropy integral: letting $D$ be the \mbox{$\|\cdot\|_{B^{-1+\gamma}_{\infty\infty}}$}-diameter of $\F$ (which is certainly finite), for $\varepsilon>D$ the integrand in (\ref{Eq: entropy integral}) is zero and so the range of integration is effectively $[0,D]$. Since $\F\subset B^{s}_{pq}$ is bounded, we may then apply Lemma \ref{Lemma: covering number bounds} with $t=-1+\gamma,p'=\infty$ (here we require the assumption $s>d/p-1$) to obtain that
	\begin{align*}
		\int_0^D \sqrt{\log{N(\F,\|\cdot\|_{B^{-1+\gamma}_{\infty\infty}},\varepsilon)}}\,\ud\varepsilon &\lesssim \int_0^D \varepsilon^{-\frac{d}{2(1+s-\gamma)}}\,\ud\varepsilon;
	\end{align*}
	this integral is finite if and only if $\frac{d}{2(1+s-\gamma)}<1$, or equivalently $d<2(s+1-\gamma)$. Since we may choose $\gamma>0$ arbitrarily small, the entropy integral (\ref{Eq: entropy integral}) can be made finite whenever $d<2(s+1)$, or equivalently (recalling that $d\geq2$), whenever $s>\frac{d}{2}-1$.
	
	\section{Additional Proofs}\label{Section: additional proofs}
	
	\begin{proof}[Proof of Proposition \ref{Prop: sharpness of CLT}]
		Without loss of generality, we may assume that $\F = \F\cap L^2_{\mu}$.
		
		We first consider the case $s>\frac{d}{2}-1$. Recall the \emph{intrinsic pseudo-distance} $\rho_{\G}$ induced by the Gaussian process $\G$, defined by $\rho_{\G}^2(f,g) = E(\G(f) - \G(g))^2$. Then $\G$ is sub-Gaussian with respect to $\rho_{\G}$ (see, for example, the discussion after Definition 2.3.5 in \cite{gineMathematicalFoundationsInfinitedimensional2016}). By using the equivalence of $\mu$ to Lebesgue density, the strict ellipticity of $\sigma$, and Lemma \ref{Lemma: 2-smoothing estimate}, we have that for $f,g\in L^2_{\mu}$,
		$\rho_{\G}(f,g) \leq C(b,\sigma,d)\|f-g\|_{H^{-1}}^2.$
		Thus $\G$ is sub-Gaussian with respect to the $H^{-1}$-norm; to prove tightness of $\G$ as a measure on $\ell^{\infty}(\F)$, it suffices to check condition \eqref{Eq: entropy integral} with $\|\cdot\|_{H^{-1}}$ in place of $\|\cdot\|_{B^{-1+\gamma}_{\infty\infty}}$. For any values of $p,q$ we may apply the covering number bound Lemma \ref{Lemma: covering number bounds} to obtain that
		\[ \int_0^{\infty} \sqrt{\log N(\F,\|\cdot\|_{H^{-1}},\varepsilon)}\,\ud\varepsilon \lesssim \int_0^D \varepsilon^{-\frac{d}{2(1+s)}} \ud\varepsilon, \]
		where $D>0$ is the $H^{-1}$-diameter of $\F$. This integral is finite since $s>\frac{d}{2}-1$, confirming condition \eqref{Eq: entropy integral}. Thus $\G$ is a tight Borel measure on $\ell^{\infty}(\F)$.
		
		Next, we consider the case $s<\frac{d}{2}-1$; then for some $\delta>0$, we have $s = \frac{d}{2}-1-\delta$. Clearly it suffices to consider the case where $p=\infty$ and $\delta>0$ arbitrarily small. By Sudakov's Theorem \cite[Corollary 2.4.13]{gineMathematicalFoundationsInfinitedimensional2016}, if
		\begin{equation}\label{Eq: Sudakov thm condition}
			\liminf_{\varepsilon\to0}\varepsilon\sqrt{\log N(\F,\rho_{\G},\varepsilon)} = \infty,
		\end{equation}
		then $\sup_{f\in\F}|\G(f)| = \infty$ a.s., and so $\G$ is not sample bounded; in particular, it is not tight.
		
		Using the strict ellipticity assumption on $\sigma\sigma^{\top}$ \eqref{Eq: strict ellipticity definition}, we have that for $f,g\in\F$,
		\begin{equation}\label{Eq: d_G LB, midpoint}
			\rho_{\G}^2(f,g) =  \left\| \sigma^{\top}\nabla L^{-1}[f-g] \right\|_{\mu}^2 \gtrsim \left\| \nabla L^{-1}[f-g] \right\|_{\mu}^2.
		\end{equation}
		Now, $L^{-1}[f-g]\in L^2_{\mu}$, and so the Poincar\'{e} inequality (\cite[Chapter 7]{gilbargEllipticPartialDifferential2001}) with reference density $\mu$ yields that
		$\|L^{-1}[f-g]\|_{\mu}\lesssim \| \nabla L^{-1}[f-g]\|_{\mu}.$
		Hence, again using the equivalence of $\|\cdot\|_{L^2}$ and $\|\cdot\|_{\mu}$,
		$$ \|L^{-1}[f-g]\|_{H^1}^2 = \|L^{-1}[f-g]\|_{L^2}^2 + \|\nabla L^{-1}[f-g]\|_{L^2}^2 \lesssim \|\nabla L^{-1}[f-g]\|_{L^2}^2. $$
		In combination with (\ref{Eq: d_G LB, midpoint}) and the fact that $L:H^1\to H^{-1}$ is a continuous linear operator (see \cite{bersPartialDifferentialEquations1964}, Part II, Lemma 3.9), we deduce that
		$\rho_{\G}(f,g) \gtrsim \|f-g\|_{H^{-1}}.$
		It therefore suffices to prove (\ref{Eq: Sudakov thm condition}) with $\|\cdot\|_{H^{-1}}$ in place of $\rho_{\G}$. 
		Recall that $\F$ is the unit ball of $B^s_{pq}$ (intersected with $L^2_{\mu}$); we may assume that $s>0$ and then apply Lemma \ref{Lemma: covering number bounds} to obtain that for $\varepsilon>0$ sufficiently small,
		$ \log N(\F,\|\cdot\|_{H^{-1}},\varepsilon) \gtrsim \varepsilon^{-\frac{d}{s+1}}, $
		and so using that $s = d/2-1-\delta$ for some $\delta>0$,
		$$  \varepsilon\sqrt{\log N(\F,\|\cdot\|_{H^{-1}}, \varepsilon)}  \gtrsim \varepsilon^{1-\frac{d}{2(s+1)}}  = \varepsilon^{-\frac{2\delta}{d - 2\delta}}.$$
		This quantity diverges as $\varepsilon\to0$, establishing (\ref{Eq: Sudakov thm condition}) when $s<\frac{d}{2}-1$.
		
		The case $s=\frac{d}{2}-1$ follows a slightly more refined version of the argument for $s<\frac{d}{2}-1$, as outlined in Remark 12 of \cite{nicklBernsteinMisesTheorems2020}; we do not give details here.
	\end{proof}
	
	\begin{proof}[Proof of Lemma \ref{Lemma: Besov bound for d_L}]
		For any $\gamma>0$ such that $\gamma\not\in\mathbb{N}$ and any function $h$, standard Besov embeddings give that
		$ \sum_{i=1}^d \|\partial_i g\|_{\infty}^2 \leq \|h\|_{C^1}^2 \leq \|h\|_{C^{1+\gamma}}^2 \lesssim \|h\|^2_{B^{1+\gamma}_{\infty\infty}},$
		for a constant depending on $d,\gamma$ only. Thus
		$ \rho_L(f,g) \lesssim \|L^{-1}[f-g]\|^2_{B^{1+\gamma}_{\infty\infty}}.$
		The result then follows from an application of Lemma \ref{Lemma: 2-smoothing estimate}.
	\end{proof}
	
	\begin{proof}[Proof of Lemma \ref{Lemma: covering number bounds}]
		By a scaling argument, we need only consider the case $M=1$. Recall the \emph{entropy numbers} $e_k,k\geq1$ of $\mathcal{B}$: $e_k$ is defined to be the smallest radius $r$ such that $\mathcal{B}$ can be covered by at most $2^k$ $\rho$-balls of radius $r$.
		
		Theorem 2 in \cite[Section 3.3]{edmundsFunctionSpacesEntropy1996} establishes that for $\mathcal{B}$ as in the statement of the lemma, the entropy numbers $e_k$ satisfy
		$e_k \simeq k^{-\frac{s-t}{d}},$
		with constants as stated. The result now follows from the fundamental relationship between entropy numbers and covering numbers (see Exercise 2.5.5 and the preceding discussion in \cite{talagrandUpperLowerBounds2014}):
		$$ e_k \simeq k^{\alpha} \Leftrightarrow \log N(\mathcal{B},\rho,\varepsilon) \simeq \varepsilon^{1/\alpha}.$$
	\end{proof}
	
	\appendix
	
	\section{A PDE Regularity Estimate}
	
	For a more detailed exposition of the PDE theory of the generator $L$ as defined in (\ref{Eq: generator definition}) and its adjoint $L^*$, we refer to \cite[Section 6]{nicklNonparametricStatisticalInference2020} and \cite[Chapter II.3]{bersPartialDifferentialEquations1964}.
	
	Consider the Poisson equation
	\begin{equation}\label{Eq: Poisson equation}
		Lu = f \quad \text{in }\Td
	\end{equation}
	where $f\in L^2(\Td)$ is given. A key property of the solution operator $L^{-1}$ defined by (\ref{Eq: solution operator definition}) is that it is `2-smoothing': see, for example, \cite[Chapter 3, Theorem 3]{bersPartialDifferentialEquations1964}. We require a quantitative version of this property, similar to equation (74) in \cite{nicklNonparametricStatisticalInference2020} or Lemma 8 in \cite{giordanoNonparametricBayesianInference2022}. We reformulate these results slightly for more general Besov spaces.
	
	\begin{lemma}\label{Lemma: 2-smoothing estimate}
		Let $t\geq2$. Assume that $b\in C^{t-2}(\Td)$, and that $\sigma\in C^{t-2}(\Td)$ is such that the strict ellipticity condition (\ref{Eq: strict ellipticity definition}) is satisfied with constant $\lambda>0$. Then for any $f\in L^2_{\mu}(\Td)$, there exists a unique solution $L^{-1}[f]\in L^2_0(\Td)$ of (\ref{Eq: Poisson equation}) such that $LL^{-1}[f] = f$ almost everywhere. Moreover, for any $s\leq t$ and any $1\leq p,q \leq \infty$,
		$$ \left\| L^{-1}[f]\right\|_{B^s_{pq}} \lesssim \|f\|_{B^{s-2}_{pq}}, $$
		where the constant depends on $s,t,p,q,d,\lambda$ and an upper bound for $\|b\|_{B^t_{\infty\infty}}$.
	\end{lemma}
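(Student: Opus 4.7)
My strategy splits into four stages. First, I would establish existence and uniqueness of $L^{-1}[f]\in L^2_0(\Td)$ together with the base estimate $\|L^{-1}[f]\|_{H^2}\lesssim \|f\|_{L^2}$. The strictly elliptic operator $L:H^2\to L^2$ is Fredholm of index zero; its kernel consists of the constants by the strong maximum principle, and its cokernel is spanned by $\mu$ since $L^*\mu=0$ by construction. The Fredholm alternative then yields a continuous bijection $L:H^2\cap L^2_0 \to L^2_\mu$ with bounded inverse, giving the $s=2$, $p=q=2$ case.

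Second, I would bootstrap to integer Sobolev orders. For integer $k$ with $2\leq k\leq t$ and $1<p<\infty$, the standard Agmon--Douglis--Nirenberg $L^p$ estimate on the torus gives $\|u\|_{W^{k,p}}\lesssim \|Lu\|_{W^{k-2,p}}+\|u\|_{L^p}$. Differentiating $Lu=f$ yields $L(\partial^\alpha u)=\partial^\alpha f + R_\alpha$, where the commutator $R_\alpha$ involves derivatives of $u$ of order $\leq |\alpha|+1$ with coefficients built from derivatives of $a$ and $b$ of order $\leq |\alpha|$, hence controlled by $\|b\|_{C^{t-2}}$ and $\|\sigma\|_{C^{t-2}}$. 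An induction on $|\alpha|\leq t-2$ closes the estimate. For $p=\infty$ and non-integer $s$, the parallel Schauder estimates in Hölder spaces give the analogous bound.

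Third, to obtain the full Besov range, I would invoke real interpolation: the bounds from the previous stage furnish boundedness of $L^{-1}$ between pairs of Sobolev (or Hölder) spaces, and the real interpolation identity $(B^{s_0}_{pp}, B^{s_1}_{pp})_{\theta,q} = B^s_{pq}$ (Chapter 4 of \cite{gineMathematicalFoundationsInfinitedimensional2016}) delivers $\|L^{-1}[f]\|_{B^s_{pq}}\lesssim \|f\|_{B^{s-2}_{pq}}$ for all $2\leq s\leq t$ and all $1\leq p,q\leq\infty$. The remaining range $s<2$ is handled by duality: $L^*$, being of the same structural form as $L$ with coefficients of comparable regularity, admits the positive-order estimate, which dualizes via $(B^{\alpha}_{pq})^* = B^{-\alpha}_{p'q'}$ (on the zero-mean subspace) to yield the negative-order bound for $L^{-1}$.

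The principal technical obstacle is handling the endpoint exponents $p\in\{1,\infty\}$, where Calderón--Zygmund theory fails directly. At $p=\infty$ this is bypassed via Schauder estimates in Hölder/Zygmund spaces; at $p=1$ one routes through the Littlewood--Paley characterization of Besov norms, where the smoothing action of $L^{-1}$ on frequency blocks sidesteps the need for $L^1$ gradient bounds. A secondary point of care is tracking the dependence on $\|b\|_{B^t_{\infty\infty}}$ (equivalent to $\|b\|_{C^{t-2}}$ after the appropriate embedding, modulo the usual gap at integer smoothness) in the commutator terms $R_\alpha$, which requires repeated use of the Leibniz rule together with the algebra property of Besov spaces of sufficient smoothness at each inductive step.
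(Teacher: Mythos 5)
Your route is genuinely different from the paper's. The paper reduces everything to a single a priori estimate for the principal part $A=\sum_{ij}a_{ij}\partial_i\partial_j$, namely $\|u\|_{B^s_{pq}}\lesssim\|Au\|_{B^{s-2}_{pq}}$ for mean-zero $u$, proved by a Fourier-multiplier/Littlewood--Paley argument on the torus (using $|\langle Au,e_k\rangle|\geq(2\pi)^2\lambda|k|^2|\langle u,e_k\rangle|$ in place of the exact Laplacian identity), and then imports the rest of the argument -- Fredholm solvability and perturbative absorption of the first-order term -- from Section 6.0.1 of \cite{nicklNonparametricStatisticalInference2020}. Your first stage (Fredholm alternative, kernel = constants, cokernel spanned by $\mu$) matches what that reference does for existence and uniqueness. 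The divergence is in the regularity estimate: you go through Agmon--Douglis--Nirenberg/Schauder theory, real interpolation, and duality, whereas the paper's Fourier route handles all $s\in\R$ and all $1\leq p,q\leq\infty$ (including the endpoints $p\in\{1,\infty\}$) in one uniform stroke. That uniformity is exactly what your approach has to work hardest to recover, and it is where two concrete soft spots remain.

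First, interpolating between integer Sobolev orders only produces $B^s_{pq}$ for non-integer $s$ lying strictly between two integers $k_0<k_1\leq t$ at which you have the estimate. In the paper's application $t=3+\beta$ with $\beta$ small, so the range $s\in(3,3+\beta]$ is not reachable this way: the next integer order $k=4$ would require $b\in C^2$, which is not assumed. You would need Besov- or H\"older-scale elliptic estimates directly at non-integer orders (your Schauder remark covers $p=\infty$, but not $1\le p<\infty$). Second, and more importantly, the duality step for $s<2$ is the case the paper actually relies on (Lemma \ref{Lemma: Besov bound for d_L} needs $s=1+\gamma\in(1,2)$ with $p=q=\infty$), and as stated it does not close: the dual estimate you invoke for $(L^*)^{-1}$ at order $2-s$ is again a below-order-$2$ estimate when $0<s<2$, so the argument is circular there; moreover $(B^\alpha_{pq})^*=B^{-\alpha}_{p'q'}$ fails precisely at $p=\infty$ or $q=\infty$, and $L^*$ carries a zeroth-order coefficient $\sum_i\partial_i\tilde b_i$ that is two derivatives rougher than $b$. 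The standard repair is to dualize only for $s\leq0$ (where the high-order estimate for $L^*$ genuinely applies, coefficient regularity permitting) and then interpolate between the $s\leq0$ and $s\geq2$ ranges to capture $s\in(0,2)$; you should say this explicitly, as it is the crux rather than a routine detail.
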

	The lemma is proved exactly as in Section 6.0.1 of \cite{nicklNonparametricStatisticalInference2020}, with the generalisation of the Besov indices requiring only cosmetic changes, once one obtains the estimate
	\begin{equation}\label{Eq: 2-smoothing key equation}
		\|u\|_{B^s_{pq}} \lesssim \|Au\|_{B^{s-2}_{pq}} \quad \forall u\in B^s_{pq}\cap L^2_0,
	\end{equation}
	where $A$ is the second-order differential operator given by
	$A = \sum_{i,j=1}^d a_{ij}(\cdot)\partial_i\partial_j$.
	This replaces an analogous estimate in \cite{nicklNonparametricStatisticalInference2020} for the case $A=\Delta$, the Laplacian. One proves the estimate (\ref{Eq: 2-smoothing key equation}) as in that reference, substituting the Fourier coefficient identity
	$ \langle \Delta u, e_k \rangle = -(2\pi)^2|k|^2\langle u,e_k \rangle$
	with the inequality
	$ |\langle Au, e_k \rangle| \geq (2\pi)^2\lambda |k|^2 |\langle u,e_k\rangle|$, which holds for $k\in\Zd\setminus\{0\}$.
	Plugging this into a Littlewood-Paley definition of the $B^s_{pq}$-norm and using a Fourier multiplier argument then yields (\ref{Eq: 2-smoothing key equation}), and thus Lemma \ref{Lemma: 2-smoothing estimate}.

	\section*{Acknowledgements}
		The author gratefully thanks Richard Nickl for his guidance and advice in this project, and also Randolf Altmeyer for helpful discussions.

	\newpage
	\bibliographystyle{plain}
	\bibliography{DiffusionCLTRefs}

\end{document}